
\documentclass[12pt,a4paper,twoside]{article}

\title{\sc On the Maxwell Inequalities\\ for Bounded and Convex Domains}
\def\shorttitle{On the Maxwell Inequalities for Bounded and Convex Domains}
\def\pauthor{Dirk Pauly}

\def\mylabelonoff{off}
\def\allowdisbrk{no}

\usepackage{a4,exscale,ifthen,amsfonts,amssymb,amsmath,amscd,graphicx}
\usepackage[english]{babel}
\usepackage[mathscr]{eucal}

\setlength{\textwidth}{16cm}
\setlength{\textheight}{22cm}
\setlength{\oddsidemargin}{-0.1cm}
\setlength{\evensidemargin}{-0.1cm}

\author{{\sf\pauthor}}
\pagestyle{myheadings}
\markboth{\pauthor}{\shorttitle}

\setcounter{footnote}{1}
\numberwithin{equation}{section}

\newenvironment{acknow}{{\vspace*{1cm}\noindent\bf Acknowledgements }}{}
\newcommand{\bewboxw}{\mbox{}\hfill $\square$ \\}

\newenvironment{proof}{{\noindent\bf Proof }}{\bewboxw}

\newcommand{\keywords}[1]{{\noindent\bf Key Words }#1}

\ifthenelse{\equal{\mylabelonoff}{on}}
{\newcommand{\mylabel}[1]{\label{#1}\fbox{{\rm #1}}}}{\newcommand{\mylabel}[1]{\label{#1}\makebox[0mm][]{}}}
\ifthenelse{\equal{\allowdisbrk}{yes}}
{\allowdisplaybreaks}{}


\usepackage[mathscr]{eucal}
\usepackage{color}

\def\rc{\color{red}}

\newcommand{\nz}{\mathbb{N}}

\newcommand{\rz}{\mathbb{R}}

\newcommand{\rt}{\rz^3}

\newcommand{\na}{\nabla}

\DeclareMathOperator{\rot}{rot}

\renewcommand{\div}{\operatorname{div}}

\newcommand{\om}{\Omega}

\def\set#1#2{\{#1\,:\,#2\}}

\newcommand{\cp}{c_{\mathtt{p}}}
\newcommand{\cpc}{c_{\mathtt{p},\circ}}

\newcommand{\cmt}{c_{\mathtt{m,t}}}

\newcommand{\cmn}{c_{\mathtt{m,n}}}

\def\cR{\mathcal{R}}

\DeclareMathOperator{\Lebesgue}{\mathsf{L}}
\newcommand{\Lgen}[2]{\Lebesgue^{#1}_{#2}}

\def\Lt{\Lgen{2}{}}

\DeclareMathOperator{\Sobolev}{\mathsf{H}}
\newcommand{\Hgen}[3]{\overset{#3}{\Sobolev}{}^{#1}_{#2}}
\def\Ho{\Hgen{1}{}{}}

\def\Hoc{\Hgen{1}{}{\circ}}

\newcommand{\scp}[2]{\left\langle#1,#2\right\rangle}
\newcommand{\scps}[2]{\langle#1,#2\rangle}

\newcommand{\norm}[1]{\left|\normdst\left|#1\right|\normdst\right|}

\newtheorem{lem}{Lemma}

\newtheorem{theo}[lem]{Theorem}

\newtheorem{rem}[lem]{Remark}


\DeclareMathOperator{\rotspace}{\mathsf{R}}
\newcommand{\rotgen}[2]{\overset{#2}{\rotspace}{}_{#1}}

\DeclareMathOperator{\divspace}{\mathsf{D}}
\newcommand{\divgen}[2]{\overset{#2}{\divspace}{}_{#1}}

\newcommand{\ho}{\Ho}
\newcommand{\hoc}{\Hoc}


\newcommand{\normos}[1]{|#1|} 


\DeclareMathOperator{\diam}{diam}
\newcommand{\diamom}{\diam(\om)}

\newcommand{\rn}{\mathbb{R}}
\renewcommand{\rt}{\rn^3}

\renewcommand{\r}{\rotgen{}{}}
\renewcommand{\rc}{\rotgen{}{\circ}}
\renewcommand{\rz}{\rotgen{0}{}}
\newcommand{\rcz}{\rotgen{0}{\circ}}
\newcommand{\rcal}{\cR}
\newcommand{\rcalc}{\overset{\circ}{\cR}}
\renewcommand{\d}{\divgen{}{}}
\newcommand{\dc}{\divgen{}{\circ}}
\newcommand{\dz}{\divgen{0}{}}
\newcommand{\dcz}{\divgen{0}{\circ}}

\renewcommand{\norm}[1]{\normos{#1}}
\renewcommand{\scp}[2]{\scps{#1}{#2}}

\begin{document}

\maketitle{}

\begin{abstract}
For a bounded and convex domain in three dimensions we show that 
the Maxwell constants are bounded from below and above by Friedrichs' and Poincar\'e's constants.\\
\keywords{Maxwell's equations, Maxwell constant,
second Maxwell eigenvalue, electro statics, magneto statics,
Poincar\'e's inequality, Friedrichs' inequality, 
Poincar\'e's constant, Friedrichs' constant}
\end{abstract}

\tableofcontents

\section{Introduction}

Let $\om\subset\rt$ be a bounded and convex domain.
It is well known that, e.g., by Rellich's selection theorem
using standard indirect arguments, 
the Poincar\'e\footnote{The estimate \eqref{poincarehoc}
is often called Friedrichs'/Steklov inequality as well.} inequalities
\begin{align}
\mylabel{poincarehoc}
\exists\,\cpc&>0&
\forall\,u&\in\hoc&
\norm{u}&\leq\cpc\norm{\na u},\\
\mylabel{poincareho}
\exists\,\cp&>0&
\forall\,u&\in\ho\cap\rn^{\bot}
&\norm{u}&\leq\cp\norm{\na u}
\end{align}
hold. Here, $\cpc$ and $\cp$ are the Poincar\'e constants, which satisfy 
$$0<\cpc=1/\sqrt{\lambda_{1}}<1/\sqrt{\mu_{2}}=\cp,$$
where $\lambda_{1}$ is the first Dirichlet 
and $\mu_{2}$ the second Neumann eigenvalue of the Laplacian.
By $\scp{\,\cdot\,}{\,\cdot\,}$ and $\norm{\,\cdot\,}$
we denote the standard inner product and induced norm
in $\Lt$ and we will write the usual $\Lt$-Sobolev spaces
as $\ho$ and $\hoc$, the latter is defined as the closure in $\ho$
of smooth and compactly supported test functions.
All spaces and norms are defined on $\om$.
Moreover, we introduce the standard Sobolev spaces 
for the rotation and divergence by $\r$ and $\d$.
As before, we will denote the closures of test vector fields
in the respective graph norms by $\rc$ and $\dc$.
An index zero at the lower right corner of the latter spaces 
indicates a vanishing derivative, e.g.,
$$\rz:=\set{E\in\r}{\rot E=0},\quad\dcz:=\set{E\in\dc}{\div E=0}.$$
As $\om$ is convex, it is especially simply connected 
and has got a connected boundary. Hence, the Neumann and Dirichlet fields
of $\om$ vanish, i.e., $\rz\cap\dcz=\rcz\cap\rz=\{0\}$.
By the Maxwell compactness properties, i.e.,
the compactness of the two embeddings
$$\rc\cap\d\hookrightarrow\Lt,\quad\r\cap\dc\hookrightarrow\Lt,$$
(and again by a standard indirect argument) the Maxwell inequalities
\begin{align}
\mylabel{maxestelecconv}
\exists\,\cmt&>0&
\forall\,E&\in\rc\cap\d&
\norm{E}&\leq\cmt\big(\norm{\rot E}^2+\norm{\div E}^2\big)^{1/2},\\
\mylabel{maxestmagconv}
\exists\,\cmn&>0&
\forall\,H&\in\r\cap\dc&
\norm{H}&\leq\cmn\big(\norm{\rot H}^2+\norm{\div H}^2\big)^{1/2}
\end{align}
hold. To the best of the author's knowledge, 
general bounds for the Maxwell constants $\cmt$ and $\cmn$ are missing.
On the other hand, at least estimates for $\cmt$ and $\cmn$ from above are very important
from the point of view of applications, such as preconditioning
or a priori and a posteriori error estimation for numerical methods.

In the paper at hand we will prove that
\begin{align}
\mylabel{cmcpintro}
\cpc\leq\cmt\leq\cmn=\cp\leq\diamom/\pi
\end{align}
holds true. We note that \eqref{cmcpintro}
is already well known in two dimensions,
even for general Lipschitz domains $\om\subset\rn^{2}$
(except of the last inequality),
but new in three dimensions.
Furthermore, the last inequality in \eqref{cmcpintro}
has been proved in the famous paper of Payne and Weinberger \cite{payneweinbergerpoincareconvex},
where also the optimality of the estimate was shown.
This paper contains a small mistake, which has been corrected in \cite{bebendorfpoincareconvex}.

\section{Results and Proofs}

We start with an inequality for irrotational fields.

\begin{lem}
\mylabel{lemNarbdiv}
For all $E\in\na\hoc\cap\d$ and all $H\in\na\ho\cap\dc$
$$\norm{E}\leq\cpc\norm{\div E},\quad\norm{H}\leq\cp\norm{\div H}.$$
\end{lem}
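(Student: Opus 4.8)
The plan is to represent every element of $\na\hoc\cap\d$ and $\na\ho\cap\dc$ as a gradient, integrate by parts once, and thereby reduce the two estimates directly to the Poincar\'e inequalities \eqref{poincarehoc} and \eqref{poincareho}.

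First I would treat $E\in\na\hoc\cap\d$. Write $E=\na u$ with $u\in\hoc$; since $E\in\d$ we have $\div E=\div\na u\in\Lt$. As $u\in\hoc$ has vanishing boundary trace and $E\in\d$, the integration by parts formula $\scp{\na u}{E}=-\scp{u}{\div E}$ is available (this is the standard duality between $\hoc$ and $\d$). Cauchy--Schwarz and \eqref{poincarehoc} then give
$$\norm{E}^2=\scp{\na u}{\na u}=-\scp{u}{\div E}\leq\norm{u}\,\norm{\div E}\leq\cpc\norm{\na u}\,\norm{\div E}=\cpc\norm{E}\,\norm{\div E}.$$
If $\norm{E}=0$ the assertion is trivial, and otherwise dividing by $\norm{E}$ yields $\norm{E}\leq\cpc\norm{\div E}$.

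Next I would treat $H\in\na\ho\cap\dc$, say $H=\na u$ with $u\in\ho$. Since adding a constant to $u$ leaves $H=\na u$ unchanged, I may and do assume $u\in\rn^{\bot}$, i.e., $u$ has vanishing mean value over $\om$; this is what makes \eqref{poincareho} applicable and is the one genuinely non-automatic step. Now $u\in\ho$ and $H\in\dc$, so $\scp{\na u}{H}=-\scp{u}{\div H}$ holds, the boundary term vanishing because fields in $\dc$ have vanishing normal trace (the standard duality between $\ho$ and $\dc$). Exactly as before,
$$\norm{H}^2=\scp{\na u}{\na u}=-\scp{u}{\div H}\leq\norm{u}\,\norm{\div H}\leq\cp\norm{\na u}\,\norm{\div H}=\cp\norm{H}\,\norm{\div H},$$
now using \eqref{poincareho}, and dividing by $\norm{H}$ (or noting triviality if $\norm{H}=0$) gives $\norm{H}\leq\cp\norm{\div H}$.

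The only points requiring any care are the two integration by parts identities; both are completely standard and follow, from the definitions of $\hoc$ and $\dc$ as closures of test functions, respectively test fields, by density together with the classical divergence theorem. So I do not expect a real obstacle here beyond bookkeeping — the substance of the lemma is precisely the interplay of these integrations by parts with the two Poincar\'e estimates.
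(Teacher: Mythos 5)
Your proposal is correct and follows essentially the same route as the paper: write the field as a gradient (normalizing the potential to have zero mean in the Neumann case), integrate by parts using the zero trace of $\hoc$ respectively the zero normal trace of $\dc$, and conclude via Cauchy--Schwarz and the appropriate Poincar\'e inequality. The only (harmless) difference is that you spell out the density argument for the integration by parts and the trivial case $\norm{E}=0$, which the paper leaves implicit.
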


\begin{proof}
Let $\varphi\in\hoc$ with $E=\na\varphi$. By \eqref{poincarehoc} we get
$$\norm{E}^2
=\scp{E}{\na\varphi}
=-\scp{\div E}{\varphi}
\leq\norm{\div E}\norm{\varphi}
\leq\cpc\norm{\div E}\norm{\na\varphi}
=\cpc\norm{\div E}\norm{E}.$$
Let $\varphi\in\ho$ with $H=\na\varphi$ and $\varphi\bot\rn$. 
Since $H\in\dc$ and by \eqref{poincareho} we obtain
$$\norm{H}^2
=\scp{H}{\na\varphi}
=-\scp{\div H}{\varphi}
\leq\norm{\div H}\norm{\varphi}
\leq\cp\norm{\div H}\norm{\na\varphi}
=\cp\norm{\div H}\norm{H},$$
completing the proof.
\end{proof}

\begin{rem}
\mylabel{remNarbdiv}
Clearly, Lemma \ref{lemNarbdiv} extends to arbitrary 
Lipschitz domains $\om\subset\rn^{N}$, $N\in\nz$.
\end{rem}

As usual in the theory of Maxwell's equations, we need another crucial tool,
the Helmholtz decompositions of vector fields into irrotational and solenoidal vector fields.
For convex domains, these decompositions are very simple. We have
\begin{align}
\mylabel{helmdecoconvex}
\Lt=\na\hoc\oplus\rot\r,\quad\Lt=\na\ho\oplus\rot\rc,
\end{align}
where $\oplus$ denotes the orthogonal sum in $\Lt$.
We note
$$\rcz=\na\hoc,\quad\rz=\na\ho,\quad\dz=\rot\r,\quad\dcz=\rot\rc.$$
Moreover, with
$$\rcalc:=\rc\cap\rot\r,\quad\rcal:=\r\cap\rot\rc$$
we have
\begin{align}
\mylabel{helmdecoconvexaugmented}
\rc=\na\hoc\oplus\rcalc,\quad\r=\na\ho\oplus\rcal
\end{align}
and see
$$\rot\rc=\rot\rcalc,\quad\rot\r=\rot\rcal.$$
We note that all occurring spaces of range-type are closed subspaces of $\Lt$,
which follows immediately by the estimates \eqref{poincarehoc}-\eqref{maxestmagconv}.
More details about the Helmholtz decompositions can be found e.g. in \cite{leisbook}.

To get similar inequalities for solenoidal vector fields as in Lemma \ref{lemNarbdiv} we 
need a crucial lemma from \cite[Theorem 2.17]{amrouchebernardidaugegiraultvectorpot},
see also \cite{saranenineqfried,grisvardbook,giraultraviartbook,costabelcoercbilinMax} 
for related partial results.

\begin{lem}
\mylabel{french}
Let $E$ belong to $\rc\cap\d$ or $\r\cap\dc$. Then $E\in\ho$ and
\begin{align}
\mylabel{frenchformula}
\norm{\na E}^2\leq\norm{\rot E}^2+\norm{\div E}^2.
\end{align}
\end{lem}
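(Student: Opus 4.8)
The strategy is to establish, for smooth vector fields on smooth bounded convex domains, a pointwise differential identity whose integrated form reduces the left hand side of \eqref{frenchformula} to the right hand side plus a boundary integral carrying a favourable sign, and then to pass to the general situation by approximation.

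First I would record the elementary identity, valid pointwise for any $E\in C^\infty(\rt,\rt)$,
$$|\na E|^2=|\rot E|^2+(\div E)^2+\div\big((E\cdot\na)E-E\,\div E\big),$$
which follows from $(\rot E)_i=\epsilon_{ijk}\partial_jE_k$, the contraction $\epsilon_{ijk}\epsilon_{ilm}=\delta_{jl}\delta_{km}-\delta_{jm}\delta_{kl}$, and the observation that the second order terms cancel in $\partial_jE_j\,\partial_kE_k-\partial_jE_k\,\partial_kE_j$. Integrating over a smooth bounded domain $G\subset\rt$ with outward unit normal $n$ gives
$$\int_G|\na E|^2=\int_G\big(|\rot E|^2+(\div E)^2\big)+\int_{\partial G}n\cdot\big((E\cdot\na)E-E\,\div E\big)\,ds.$$
Next I would specialise the boundary conditions. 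If $n\times E=0$ on $\partial G$, so that $E=(n\cdot E)n$ there, differentiating this relation along $\partial G$ shows that the boundary integrand equals $-\mathcal H\,(n\cdot E)^2$ with $\mathcal H$ the sum of the principal curvatures of $\partial G$ with respect to $n$; if $n\cdot E=0$ on $\partial G$, so that $E$ is tangential there, the same kind of computation shows it equals $-\mathrm{II}(E,E)$ with $\mathrm{II}$ the second fundamental form of $\partial G$. Since $G$ is convex, $\mathcal H\geq0$ and $\mathrm{II}\geq0$, so in either case the boundary term is $\leq0$ and we obtain $\norm{\na E}^2\leq\norm{\rot E}^2+\norm{\div E}^2$ for such smooth fields on smooth bounded convex $G$.

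It then remains to remove the smoothness assumptions, which I expect to be the main obstacle, and indeed the reason this lemma is quoted from \cite{amrouchebernardidaugegiraultvectorpot} rather than reproved here. For the regularity statement $E\in\ho$ alone one can alternatively argue via the Helmholtz decompositions \eqref{helmdecoconvex}, \eqref{helmdecoconvexaugmented}: the irrotational component of $E$ is a gradient $\na\varphi$ with $\Delta\varphi=\div E$ and $\varphi\in\hoc$ (resp.\ $\varphi\in\ho\cap\rn^{\bot}$), which lies in $H^{2}$ by the well known $H^{2}$-regularity of the Dirichlet (resp.\ Neumann) Laplacian on convex domains, so that $\na\varphi\in\ho$, while the solenoidal component is a curl with the same rotation as $E$ and divergence zero. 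To obtain the sharp inequality \eqref{frenchformula} for a general bounded convex $\om$ one combines a density argument, approximating $E$ by smooth fields admissible for the respective trace condition, with an approximation of $\om$ from inside by smooth bounded convex domains, transferring the estimate just proved and passing to the limit by means of the uniform bound and the weak lower semicontinuity of $E\mapsto\norm{\na E}$; the care required to make the boundary conditions and the boundary term meaningful for a merely Lipschitz convex boundary is precisely the technical heart of \cite{amrouchebernardidaugegiraultvectorpot}.
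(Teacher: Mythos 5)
The paper does not actually prove Lemma \ref{french}: it is imported verbatim from \cite[Theorem 2.17]{amrouchebernardidaugegiraultvectorpot} (with \cite{saranenineqfried,grisvardbook,giraultraviartbook,costabelcoercbilinMax} cited for related results), so there is no in-paper argument to compare yours against. Your sketch reconstructs the standard proof behind that citation, and its computational core is correct: the pointwise identity $|\na E|^2=|\rot E|^2+(\div E)^2+\div\big((E\cdot\na)E-E\,\div E\big)$ checks out via the $\epsilon$--$\delta$ contraction, and the identification of the boundary integrand as $-\mathcal{H}\,(n\cdot E)^2$ in the normal-trace case and $-\mathrm{II}(E,E)$ in the tangential-trace case, both nonpositive by convexity, is exactly the Rellich/Gaffney-type argument used in the literature. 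Two caveats on the parts you defer. First, your alternative route to the regularity statement $E\in\ho$ via the Helmholtz decomposition only settles the gradient component (through $H^2$-regularity of the Dirichlet/Neumann Laplacian on convex domains); for the solenoidal component, ``a curl with vanishing divergence'' does not by itself yield $H^1$-regularity, and one needs essentially the same convexity-based machinery again, so this is not a genuine shortcut. Second, the density of smooth fields satisfying the exact boundary condition in $\rc\cap\d$ resp.\ $\r\cap\dc$ on a merely Lipschitz (convex) domain is false in general as stated; the actual proof in \cite{amrouchebernardidaugegiraultvectorpot} instead approximates $\om$ from inside by smooth convex domains, solves auxiliary boundary value problems there, and passes to the limit using the uniform constant $1$ in \eqref{frenchformula} and weak lower semicontinuity. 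You correctly flag this as the technical heart, but as written the ``density argument'' step is the one place where the sketch, taken literally, would not go through.
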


We emphasize that for $E\in\hoc$ and any domain $\om\subset\rt$
\begin{align}
\mylabel{frenchformulaequal}
\norm{\na E}^2=\norm{\rot E}^2+\norm{\div E}^2
\end{align}
holds since $-\Delta=\rot\rot-\na\div$.
This formula is no longer valid if $E$ has just the tangential
or normal boundary condition but for convex domains 
the inequality \eqref{frenchformula} remains true.

\begin{lem}
\mylabel{lemNThreerot}
For all vector fields $E$ in $\rc\cap\rot\r$ or $\r\cap\rot\rc$
$$\norm{E}\leq\cp\norm{\rot E}.$$
\end{lem}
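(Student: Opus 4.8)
The plan is to combine Lemma \ref{french} with a duality/integration-by-parts argument of exactly the same flavour as in Lemma \ref{lemNarbdiv}, but now applied ``coordinatewise'' to the components of $E$. Suppose first $E\in\rcalc=\rc\cap\rot\r$. By Lemma \ref{french} we already know $E\in\ho$ and $\norm{\na E}^2\leq\norm{\rot E}^2+\norm{\div E}^2=\norm{\rot E}^2$, since $\div E=0$ (recall $\rot\r=\dz$). So the only thing left to do is to upgrade the gradient bound $\norm{E}\leq\cp\norm{\na E}$ to $\norm{E}\leq\cp\norm{\rot E}$ — i.e. to show that for such fields the full gradient can be replaced by the rotation without losing the Poincar\'e constant $\cp$.

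The key step: since $E\in\ho$ and $E=\rot\Phi$ for some $\Phi\in\r$, while on the other hand $E\bot\na\hoc$ (as $E\in\rcalc\subset(\na\hoc)^{\bot}$ by \eqref{helmdecoconvexaugmented}), each Cartesian component $E_k$ lies in $\ho$ and is orthogonal to the constants: indeed $\scp{E_k}{1}=\scp{E}{e_k}=\scp{\rot\Phi}{e_k}=\scp{\Phi}{\rot e_k}=0$ (with $e_k$ the constant unit field, using $\Phi\in\r$ and $\rot e_k=0$, together with the vanishing tangential trace built into one of the two spaces). Hence \eqref{poincareho} applies to each $E_k$, giving $\norm{E_k}\leq\cp\norm{\na E_k}$; summing over $k$ yields $\norm{E}\leq\cp\norm{\na E}\leq\cp\norm{\rot E}$ by the first step. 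The case $E\in\rcal=\r\cap\rot\rc$ is handled identically, writing $E=\rot\Phi$ with $\Phi\in\rc$ and using $E\bot\na\ho$.

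The main obstacle is the orthogonality-to-constants step for the individual components: one must be careful that the ``$\scp{\rot\Phi}{e_k}=\scp{\Phi}{\rot e_k}$'' integration by parts is legitimate, i.e. that no boundary term appears. For $E\in\rcal=\r\cap\rot\rc$ this is immediate because $\Phi\in\rc$ carries the vanishing tangential trace; for $E\in\rcalc=\rc\cap\rot\r$ one instead uses $E=\rot\Phi$ with $\Phi\in\r$ and pairs against $e_k\in\rcz=\na\hoc$ — writing $e_k$ locally as a gradient is not possible globally, so here one rather argues directly that $E\in\rc$ forces $\scp{E}{\na\psi}=-\scp{\div E}{\psi}=0$ for all $\psi\in\ho$, hence $E\bot\na\ho\supset\{\text{constants}\}$, which again gives $\scp{E_k}{1}=0$. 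Once this is in place the rest is the routine Poincar\'e estimate component by component, and no convexity is used beyond what is already packaged into Lemmas \ref{french} and the Helmholtz decompositions \eqref{helmdecoconvex}--\eqref{helmdecoconvexaugmented}.
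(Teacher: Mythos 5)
Your argument for the case $E\in\rcal=\r\cap\rot\rc$ is correct and in fact slightly more direct than the paper's: since $\rot\rc=\dcz$, Lemma \ref{french} applies to $E$ itself and gives $E\in\ho$ with $\norm{\na E}\leq\norm{\rot E}$, the integration by parts $\scp{E}{e_k}=\scp{\rot\Phi}{e_k}=\scp{\Phi}{\rot e_k}=0$ is legitimate because the potential $\Phi$ carries the tangential boundary condition, and the componentwise Poincar\'e inequality \eqref{poincareho} finishes. The paper instead routes this half through the decomposition $E=E_{0}+E_{\rot}$ and a duality pairing; both work.

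The other case, $E\in\rcalc=\rc\cap\rot\r$, contains a genuine gap: the orthogonality $\scp{E}{e_k}=0$ is false in general, and your justification for it rests on the wrong boundary condition. Writing $e_k=\na x_k$ with $x_k\in\ho\setminus\hoc$, integration by parts gives $\scp{E}{e_k}=-\scp{\div E}{x_k}+\int_{\partial\om}(\nu\cdot E)\,x_k=\int_{\partial\om}(\nu\cdot E)\,x_k$, and this boundary term vanishes only if the \emph{normal} trace of $E$ vanishes, i.e.\ if $E\in\dc$; the condition $E\in\rc$ kills the tangential trace and is of no help here. Your sentence ``$E\in\rc$ forces $\scp{E}{\na\psi}=-\scp{\div E}{\psi}$ for all $\psi\in\ho$'' is exactly this confusion: that identity holds for $\psi\in\hoc$ or for $E\in\dc$, not for $E\in\rc$. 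Concretely, on the unit ball take $f(r)=3-2r$ and $\Phi=f(r)(-y,x,0)$ (smoothed near the origin if one insists): then $E:=\rot\Phi\in\dz=\rot\r$, the tangential trace of $E$ vanishes because $2f(1)+f'(1)=0$, yet $\scp{E}{e_3}=\int_\om\big(2f+(x^2+y^2)f'/r\big)=8\pi/3\neq0$. Subtracting the mean does not rescue the argument, since it only bounds $\norm{E-a}$, not $\norm{E}$. The repair is the paper's duality step: choose the potential $\Phi\in\rcal$ with $\rot\Phi=E$; your componentwise argument applies verbatim to $\Phi$ (which lies in $\r\cap\dcz$ and in $\rot\rc$, hence has zero mean), giving $\norm{\Phi}\leq\cp\norm{\na\Phi}\leq\cp\norm{\rot\Phi}=\cp\norm{E}$, and then $\norm{E}^2=\scp{E}{\rot\Phi}=\scp{\rot E}{\Phi}\leq\cp\norm{\rot E}\norm{E}$, where now the integration by parts is justified because it is $E$, not $\Phi$, that lies in $\rc$.
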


\begin{proof}
Let $E\in\rot\r=\rot\rcal$ and $\Phi\in\rcal$ with $\rot\Phi=E$.
Then $\Phi\in\ho$ by Lemma \ref{french} since $\rcal=\r\cap\dcz$.
Moreover, $\Phi=\rot\Psi$ can be represented by some $\Psi\in\rc$.
Hence, for any constant vector $a\in\rt$ we have $\scp{\Phi}{a}=0$. 
Thus, $\Phi$ belongs to $\ho\cap(\rt)^{\bot}$.
Then, since $E\in\rc$ and by Lemma \ref{french} we get
$$\norm{E}^2
=\scp{E}{\rot\Phi}
=\scp{\rot E}{\Phi}
\leq\norm{\rot E}\norm{\Phi}
\leq\cp\norm{\rot E}\norm{\na\Phi}
\leq\cp\norm{\rot E}\norm{\underbrace{\rot\Phi}_{=E}}.$$
If $E\in\rot\rc$ there exists $\Phi\in\rc$ with $\rot\Phi=E$. 
Using \eqref{helmdecoconvexaugmented} we decompose 
$$E=E_{0}+E_{\rot}\in\rz\oplus\rcal.$$
Then, $\rot E_{\rot}=\rot E$ and
again by Lemma \ref{french} we see $E_{\rot}\in\ho$.
Let $a\in\rt$ such that $E_{\rot}-a\in\ho\cap(\rt)^{\bot}$.
Since $\Phi\in\rc$, $\scp{\rot\Phi}{H_{0}}$ and $\scp{\rot\Phi}{a}$ vanish.
By Lemma \ref{french}
$$\norm{E}^2
=\scp{\rot\Phi}{E}
=\scp{\underbrace{\rot\Phi}_{=E}}{E_{\rot}-a}
\leq\norm{E}\norm{E_{\rot}-a}\\
\leq\cp\norm{E}\norm{\na E_{\rot}}
\leq\cp\norm{E}\norm{\underbrace{\rot E_{\rot}}_{=\rot E}}$$
holds, which completes the proof.
\end{proof}

\begin{rem}
\mylabel{lemNThreerottwod}
It is well known that Lemma \ref{lemNThreerot} 
holds in two dimensions for any Lipschitz domain $\om\subset\rn^2$.
This follows immediately from Lemma \ref{lemNarbdiv}
if we take into account that in two dimensions the rotation $\rot$
is given by the divergence $\div$ after $90^{\circ}$-rotation 
of the vector field to which it is applied.
\end{rem}

\begin{theo}
\mylabel{maintheo}
For all vector fields $E\in\rc\cap\d$ and $H\in\r\cap\dc$
$$\norm{E}^2
\leq\cpc^2\norm{\div E}^2+\cp^2\norm{\rot E}^2,\quad
\norm{H}^2
\leq\cp^2\norm{\div H}^2+\cp^2\norm{\rot H}^2$$
hold, i.e., $\cmt,\cmn\leq\cp$.
Moreover, $\cpc\leq\cmt\leq\cmn=\cp\leq\diam(\om)/\pi$.
\end{theo}

\begin{proof}
By the Helmholtz decomposition \eqref{helmdecoconvex} we have
$$\rc\cap\d\ni E=E_{\na}+E_{\rot}\in\na\hoc\oplus\rot\r$$
with $E_{\na}\in\na\hoc\cap\d$ and $E_{\rot}\in\rc\cap\rot\r$ as well as
$\div E_{\na}=\div E$ and $\rot E_{\rot}=\rot E$.
By Lemma \ref{lemNarbdiv} and Lemma \ref{lemNThreerot} and orthogonality we obtain
$$\norm{E}^2=\norm{E_{\na}}^2+\norm{E_{\rot}}^2
\leq\cpc^2\norm{\div E}^2+\cp^2\norm{\rot E}^2.$$
Similarly we have
$$\r\cap\dc\ni H=H_{\na}+H_{\rot}\in\na\ho\oplus\rot\rc$$
with $H_{\na}\in\na\ho\cap\dc$ and $H_{\rot}\in\r\cap\rot\rc$ as well as
$\div H_{\na}=\div H$ and $\rot H_{\rot}=\rot H$.
As before,
$$\norm{H}^2=\norm{H_{\na}}^2+\norm{H_{\rot}}^2
\leq\cp^2\norm{\div H}^2+\cp^2\norm{\rot H}^2.$$
This shows the upper bounds.
For the lower bounds, let $\lambda_{1}$ be the first Dirichlet eigenvalue
of the negative Laplacian $-\Delta$, i.e.,
$$\frac{1}{\cpc^2}
=\lambda_{1}
=\inf_{0\neq u\in\hoc}\frac{\norm{\na u}^2}{\norm{u}^2},$$
and let $u\in\hoc$ be an eigenfunction to $\lambda_{1}$.
Note that $u$ satisfies 
$$\forall\,\varphi\in\hoc\quad\scp{\na u}{\na\varphi}=\lambda_{1}\scp{u}{\varphi}.$$
Then $0\neq E:=\na u\in\na\hoc\cap\d=\rcz\cap\d$ 
and $-\div E=-\div\na u=\lambda_{1}u$.
By \eqref{maxestelecconv} and \eqref{poincarehoc} we have
\begin{align*}
\norm{E}
&\leq\cmt\norm{\div E}
=\cmt\lambda_{1}\norm{u}
\leq\cmt\lambda_{1}\cpc\norm{\na u}
=\frac{\cmt}{\cpc}\norm{E},
\end{align*}
yielding $\cpc\leq\cmt$.
Now, let $\mu_{2}$ be the second Neumann eigenvalue
of the negative Laplacian $-\Delta$, i.e.,
$$\frac{1}{\cp^2}
=\mu_{2}
=\inf_{0\neq u\in\ho\cap\rn^{\bot}}\frac{\norm{\na u}^2}{\norm{u}^2},$$
and let $u\in\ho\cap\rn^{\bot}$ be an eigenfunction to $\mu_{2}$.
Note that $u$ satisfies 
$$\forall\,\varphi\in\ho\cap\rn^{\bot}\quad\scp{\na u}{\na\varphi}=\mu_{2}\scp{u}{\varphi}$$
and that this relation holds even for all $\varphi\in\ho$.
Then $0\neq H:=\na u\in\na\ho\cap\dc=\rz\cap\dc$ 
and satisfies $-\div H=-\div\na u=\mu_{2}u$.
By \eqref{maxestmagconv} and \eqref{poincareho} we have
\begin{align*}
\norm{H}
&\leq\cmn\norm{\div H}
=\cmn\mu_{2}\norm{u}
\leq\cmn\mu_{2}\cp\norm{\na u}
=\frac{\cmn}{\cp}\norm{H},
\end{align*}
yielding $\cp\leq\cmn$ and completing the proof.
\end{proof}

\begin{rem}
\mylabel{eigenvalues}
\begin{itemize}
\item[\bf(i)] 
It is unclear but most probable that $\cpc<\cmt<\cmn=\cp$ holds.
In forthcoming publications \cite{paulymaxconst1,paulymaxconst2}
we will show more and sharper estimates on the Maxwell constants,
showing additional and sharp relations 
between the Maxwell and the Poincar\'e/Friedrichs/Steklov constants.
\item[\bf(ii)] 
Our results extend also to all polyhedra which allow the 
$\ho$-regularity of the Maxwell spaces $\rc\cap\d$ and $\r\cap\dc$
or to domains whose boundaries consist of combinations of convex boundary parts 
and polygonal parts which allow the $\ho$-regularity.
Is is shown in \cite[Theorem 4.1]{costabelcoercbilinMax} 
that \eqref{frenchformula}, even \eqref{frenchformulaequal}, 
still holds for all $E\in\ho\cap\rc$ or $E\in\ho\cap\dc$ 
if $\om$ is a polyhedron\footnote{The crucial point is 
that the unit normal is piecewise constant and hence the curvature is zero.}.
We note that even some non-convex polyhedra admit the $\ho$-regularity of the Maxwell spaces
depending on the angle of the corners, which are not allowed to by too pointy.
\item[\bf(iii)] 
Looking at the proof, the lower bounds $\cpc\leq\cmt$ and $\cp\leq\cmn$ 
remain true in more general situations, i.e., 
for bounded Lipschitz\footnote{The Lipschitz assumption
can also be weakened. It is sufficient that $\om$ admits the Maxwell compactness properties.}  
domains $\om\subset\rt$.
\end{itemize}
\end{rem}

\begin{acknow}
The author is deeply indebted to Sergey Repin not only for bringing his attention
to the problem of the Maxwell constants in 3D.
Moreover, the author wants to thank Sebastian Bauer und Karl-Josef Witsch 
for nice and  deep discussions.
\end{acknow}

\bibliographystyle{plain} 
\bibliography{/Users/paule/Library/texmf/tex/TeXinput/bibtex/paule}

\end{document}